\NeedsTeXFormat{LaTeX2e}

\documentclass[11pt]{amsart}

\usepackage{amsmath}
\usepackage{amsfonts}
\usepackage{amssymb}
\usepackage{array}
\usepackage{multirow}
\usepackage{mathtools}
\usepackage[english]{babel}
\usepackage{hyperref}

\pagestyle{headings}
\newtheorem{thm}{Theorem}[section]
\newtheorem{lem}[thm]{Lemma}

\newtheorem{cor}[thm]{Corollary}

\theoremstyle{definition}
 
\newcommand{\Z}{{\mathbf{Z}}}
\newcommand{\Q}{{\mathbf{Q}}}
\newcommand{\OK}{{\mathcal K}}
\newcommand{\OO}{{\mathcal O}}

\newcommand{\floor}[1]{\left\lfloor#1\right\rfloor}
\newcommand{\p}{\mathfrak{p}}
\newcommand{\q}{\mathfrak{q}}
\newcommand{\hgt}{\textrm{h}}

\let\lm=\lambda
\let\Lm=\Lambda
\let\ep=\epsilon
\let\abs=\envert
\let\bs=\backslash

\newcommand{\acr}{\newline\indent}

\theoremstyle{remark}

\begin{document}
\title[An exponential diophantine equation]{An exponential diophantine equation related to odd perfect numbers}
\author[Tomohiro Yamada]{Tomohiro Yamada*}
\address{\llap{*\,}Center for Japanese language and culture\acr
                   Osaka University\acr
                   562-8558\acr
                   8-1-1, Aomatanihigashi, Minoo, Osaka\acr
                   JAPAN}
\email{tyamada1093@gmail.com}

\subjclass[2010]{Primary 11D45, 11D61; Secondary 11A05, 11A25, 11A51, 11J68.}
\keywords{Exponential diophantine equation, odd perfect numbers; the sum of divisors;
arithmetic functions}

\begin{abstract}
We shall show that, for any given primes $\ell\geq 17$ and $p, q\equiv 1\pmod{\ell}$,
the diophantine equation  $(x^\ell-1)/(x-1)=p^m q$ has at most four positive integral solutions $(x, m)$
and give its application to odd perfect number problem.
\end{abstract}

\maketitle

\section{Introduction}\label{intro}

The purpose of this paper is to bound the number of integral solutions of the diophantine equation
\begin{equation}\label{eq11}
\frac{x^\ell-1}{x-1}=p^m q, m\geq 0.
\end{equation}
This equation arises from our study of odd perfect numbers of a certain form.
$N$ is called perfect if the sum of divisors of $N$ except $N$ itself
is equal to $N$.
It is one of the oldest problem in mathematics whether or not an odd perfect number exists.
Euler has shown that an odd perfect number must be of the form
$N=p^{\alpha} q_1^{2\beta_1}\cdots q_k^{2\beta_k}$
for distinct odd primes $p, q_1, \ldots, q_k$ and positive integers
$\alpha, \beta_1, \ldots, \beta_r$ with $p\equiv \alpha\equiv 1\pmod{4}$.

However, we do not know a proof of the nonexistence of odd perfect numbers
even of the special form $N=p^{\alpha} (q_1 q_2 \cdots q_k)^{2\beta}$,
although McDaniel and Hagis conjecture that there exists no such one in \cite{MDH}.
Gathering various results such as \cite{CW}, \cite{FNO} \cite{HMD}, \cite{Ka1}, \cite{Mc}, \cite{MDH} and \cite{St},
we know that $\beta\geq 9$, $\beta\not\equiv 1\pmod{3}, \beta\not\equiv 2\pmod{5}$
and $\beta$ cannot take some other values such as $11, 14, 18, 24$.

We have shown that, if $N=p^{\alpha} (q_1 q_2 \cdots q_k)^{2\beta}$ is an odd perfect number,
then $k\leq 4\beta^2+2\beta+2$ in \cite{Ymd1}.
Recently, we have improved this upper bound by $2\beta^2+8\beta+2$
in \cite{Ymd3}, where the coefficient $8$ of $\beta$
can be replaced by $7$ if $2\beta+1$ is not a prime or $\beta\geq 29$.
Since it is known that $N<2^{4^{k+1}}$ from \cite{Nie}, we have
\[N<2^{4^{2\beta^2+8\beta+3}}.\]

The key point for this result is the diophantine lemma
that, if $\ell, p, q$ are given primes such that $\ell\geq 19$ and $p\equiv q\equiv 1\pmod{\ell}$, then
(\ref{eq11}) has at most six integral solutions $(x, m)$ such that $x$ is a prime below $2^{4^{\ell ^2}}$
and at most five such solutions if $\ell$ is a prime $\geq 59$
(we note that, by Theorems 94 and 95 in Nagell \cite{Nag},
any prime factor of $(x^\ell-1)/(x-1)$ with $\ell$ prime must be $\equiv 1\pmod{\ell}$ or equal to $\ell$).
Combining this result with an older upper bound in \cite{Ymd1}, we obtain the above upper bound
for $N$.

Now we return to the equation (\ref{eq11}),
which is a special type of Thue-Mahler equations.
Evertse gave an explicit upper bound for the numbers of solutions
of such equations.
Theorem 3 of \cite{Eve} gives that a slightly generalized equation
$(x^\ell-y^\ell)/(x-y)=p^m q^n$ has at most $2\times 7^{7(\ell-1)^3}$ integral solutions
for $\ell\geq 4$.  In this paper, we would like to obtain a stronger upper bound
for the numbers of solutions of (\ref{eq11}).

\begin{thm}\label{th1}
If $\ell, p, q$ are given primes such that $\ell\geq 19$ and $p\equiv q\equiv 1\pmod{\ell}$, then
(\ref{eq11}) has at most four positive integral solutions $(x, m)$.
Moreover, if (\ref{eq11}) has five integral solutions $(x_i, m_i)$ with $m_5>m_4>\cdots >m_1\geq 0$,
then $m_1=0$ and $x_2=x_1^r$ for some prime $r\neq\ell$.
\end{thm}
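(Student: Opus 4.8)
The plan is to study the equation $(x^\ell-1)/(x-1)=p^m q$ by passing to the factorization of the cyclotomic-type form $\Phi(x):=(x^\ell-1)/(x-1)$ over the ring of integers of $K=\mathbf{Q}(\zeta_\ell)$, exploiting that every prime factor of $\Phi(x)$ is $\equiv 1\pmod\ell$ or equals $\ell$ (Nagell), so that since $p\equiv q\equiv 1\pmod\ell$ both $p$ and $q$ split completely in $K$. First I would fix a prime ideal $\p\mid p$ and $\q\mid q$ and analyse the $\p$-adic and $\q$-adic valuations of $\zeta_\ell^{j}x-1$ along a hypothetical solution; the key structural input is that two solutions $(x_i,m_i)$, $(x_j,m_j)$ with the same residual prime $q$ force a strong multiplicative relation between $x_i$ and $x_j$, which is where the claim $x_2=x_1^r$ will come from. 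The standard device here is a \emph{gap principle}: if $(x_i,m_i)$ are ordered so that $m_1<m_2<\cdots$, then consecutive solutions must be very far apart (roughly $x_{i+1}\gg x_i^{c}$ for some $c>1$), because $p^{m_{i+1}}\mid \Phi(x_{i+1})$ forces $x_{i+1}$ to be large relative to $p^{m_i}$, and $p^{m_i}$ is already comparable to $x_i^{\ell-1}$.

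The core of the argument, I expect, is a two-pronged bound. On one side, a lower bound for the size of solutions: from $p^{m}q = \Phi(x) \asymp x^{\ell-1}$ and $p\equiv 1\pmod\ell$ one gets $p\geq \ell+1$, so $x^{\ell-1}\geq p^{m}$, giving $m \le (\ell-1)\log x/\log p$; combined with the gap principle this yields that the number of solutions with $m\ge 1$ is absolutely bounded. On the other side, to get the sharp constant $4$ (rather than the $7^{7(\ell-1)^3}$ of Evertse), I would invoke a linear-forms-in-two-logarithms estimate: writing $\Lambda = m_j\log p - (\text{something})\log x_j$ or the analogous $\p$-adic linear form $\operatorname{ord}_\p(\zeta_\ell^a x_j - 1)$, a quantitative lower bound for $|\Lambda|$ (in the archimedean case) or for $p$-adic linear forms (Bugeaud–Laurent type) contradicts the upper bound coming from $p^{m_j}\mid\Phi(x_j)$ once $j$ is large enough and $x_j$ is correspondingly large. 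The hypothesis $\ell\ge 19$ (and the finer $\ell\ge 59$ thresholds mentioned in the introduction) is exactly the slack needed to make the numerical constants in the linear-forms bound win.

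For the ``moreover'' clause specifically: assume five solutions with $m_5>m_4>\cdots>m_1\ge 0$. First I would show $m_1=0$ — otherwise all five have $m_i\ge 1$, and the gap principle plus the linear-forms bound gives at most four such, a contradiction. With $m_1=0$ we have $\Phi(x_1)=q$, i.e. $q$ is itself a value of $\Phi$; then for the second solution, $\Phi(x_2)=p^{m_2}q=p^{m_2}\Phi(x_1)$, and I would analyse $\gcd(\Phi(x_1),\Phi(x_2))$ using the identity $\gcd(\Phi(a),\Phi(b))$ is governed by $\gcd(a-1,b-1)$ and the order of the relevant primes: since $q\mid\Phi(x_1)$ and $q\mid\Phi(x_2)$, we need $x_2\equiv x_1\pmod{q}$ in the appropriate sense, and pushing this through the structure of $\Phi$ (using that $q\equiv 1\pmod\ell$ so the multiplicative order of $x_1$ mod $q$ is $\ell$) forces $x_2$ to lie in the cyclic group generated by $x_1$ modulo every prime dividing $\Phi(x_1)$, which upgrades to the genuine power relation $x_2=x_1^r$.

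The main obstacle will be the last step — extracting an exact equation $x_2=x_1^r$ from congruence/valuation information rather than merely a congruence $x_2\equiv x_1^r$. This requires either a height argument (the solutions are so sparse, by the gap principle, that $x_2=x_1^r$ is the only possibility consistent with the size constraints) or a direct manipulation of $\Phi(x_2)/\Phi(x_1)$ showing it can only be a prime power when $x_2$ is a power of $x_1$. I would handle it by combining the gap principle (which pins down $m_2,m_3$ very tightly once $m_1=0$) with the elementary identity $\Phi(x_1^r)=\Phi(x_1)\cdot\frac{x_1^{r\ell}-1}{x_1^\ell-1}\cdot\frac{x_1-1}{x_1^r-1}$ to verify that powers of $x_1$ do produce solutions of the right shape, and then a counting/height bound to show no other $x_2$ can.
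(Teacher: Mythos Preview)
Your overall architecture---a gap principle feeding into a linear-forms-in-logarithms upper bound---matches the paper's, but two concrete choices are off and would prevent the argument from closing.

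First, the field. You propose to work in $K=\Q(\zeta_\ell)$ and analyse $\p$-adic valuations of $\zeta_\ell^{j}x-1$. The paper instead descends to the \emph{quadratic} subfield $\Q(\sqrt{D})$ with $D=(-1)^{(\ell-1)/2}\ell$, via the representation $\Phi_\ell(x)=X^2-DY^2$ (Lemma~\ref{lm3}). This matters decisively: Matveev's constant $C(n)$ grows super-exponentially in the degree and in the number $n$ of logarithms, so over a field of degree $\ell-1$ you cannot beat the gap principle and reach the sharp bound~$4$. Over the quadratic field the relevant linear form is
\[
\Lambda=u\log\ep\pm m\log\frac{\bar\pi}{\pi}\pm\log\frac{\bar\eta}{\eta},
\]
a form in three logarithms of degree-$2$ numbers, with the small upper bound $\abs{\Lambda}<1.26h/x$ coming from $\abs{Y/(X-Y\sqrt{D})}<0.63/x$. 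Your proposed form $m_j\log p-(\text{something})\log x_j$ is not the right object; $x$ does not appear in the linear form at all.

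Second, the step $x_2=x_1^{r}$. You try to promote a congruence $x_2\equiv x_1^{r}\pmod{q}$ to an equality by height considerations, and you rightly flag this as the main obstacle---indeed that route does not close. The paper sidesteps it entirely by splitting into (a) $x_1,x_2$ multiplicatively independent, where a gap $x_2>x_1^{\lfloor(\ell+1)/6\rfloor}$ holds, and (b) $x_1,x_2$ multiplicatively dependent, say $x_i=y^{r_i}$, where Bang--Zsigmondy (Lemma~\ref{lm2}) applied to the factors $\Phi_{d\ell}(y)$ of $\Phi_\ell(x_i)$ forces at least three primitive prime factors unless $r_1=1$ and $r_2$ is prime; then $\Phi_\ell(x_1)=q$, so $m_1=0$. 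No congruence or height argument is needed.

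Finally, you omit the computational tail: for $17\leq\ell\leq 41$ the analytic inequalities alone are not strong enough, and the paper disposes of the finitely many small $x_1$ by direct tabulation (Tables~1 and~2).
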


Combining this result with an argument in \cite{Ymd3}, we obtain the following new upper bound
for odd perfect numbers of a special form.
\begin{cor}
If $N=p^e (q_1 q_2 \ldots q_k)^{2\beta}$ is an odd perfect number
with $p, q_1, q_2$, $\ldots, q_k$ distinct primes and $p\equiv e\equiv 1\pmod{4}$,
then, $k\leq 2\beta^2+6\beta+2$ and $N<2^{4^{2\beta^2+6\beta+3}}$.
\end{cor}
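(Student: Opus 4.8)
The plan is to feed Theorem~\ref{th1} into the divisor-counting method of \cite{Ymd3}. Put $\ell=2\beta+1$ and let $\sigma$ denote the sum-of-divisors function, so that $\sigma(q_i^{2\beta})=(q_i^{\ell}-1)/(q_i-1)$. Perfection of $N$ is the identity $\sigma(N)=2N$, that is,
\[
\frac{p^{e+1}-1}{p-1}\prod_{i=1}^{k}\frac{q_i^{\ell}-1}{q_i-1}=2\,p^{e}\prod_{i=1}^{k}q_i^{2\beta}.
\]
The case that $2\beta+1$ is composite is handled by a parallel argument; I concentrate on the main case that $\ell$ is a prime, necessarily $\ell\geq 19$ since $\beta\geq 9$. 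By Theorems~94 and~95 of \cite{Nag} every prime divisor of $(q_i^{\ell}-1)/(q_i-1)$ is $\equiv 1\pmod{\ell}$ or equals $\ell$; as such a prime must also divide $2N$, it lies in $\{\ell\}\cup\{p\}\cup\{q_j:q_j\equiv 1\pmod{\ell}\}$.

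Next I would rerun the count of \cite{Ymd3}. For each prime $r$ in that set, $\sigma(N)=2N$ forces $\sum_i v_r\bigl((q_i^{\ell}-1)/(q_i-1)\bigr)\leq v_r(2N)$, which is $2\beta$ when $r$ is some $q_j$ and $e$ when $r=p$; conversely $(q_i^{\ell}-1)/(q_i-1)>q_i^{2\beta}$ makes each factor rich in prime divisors, and those prime divisors other than $p$ and $\ell$ are themselves among the $q_j$'s, each \`\`usable'' at most $2\beta$ times. Balancing these two constraints produces a bound of the shape $k\leq 2\beta^{2}+c\beta+d$ whose linear coefficient $c$ is governed by the number of indices $i$ for which $(q_i^{\ell}-1)/(q_i-1)$ has only two distinct prime divisors — excluding the essentially impossible prime-power case, such a factor is of the form $p'^{m}q'$ with $p',q'\equiv 1\pmod{\ell}$, which is the configuration hardest to rule out and most costly for the lower bound on the number of prime factors. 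But this is exactly Theorem~\ref{th1}: for each relevant pair of primes there are at most four admissible $q_i$, and the single hypothetical fifth solution ($m_1=0$, $x_2=x_1^{r}$) is too rigid to affect the count. Replacing the weaker diophantine input used in \cite{Ymd3} by \`\`at most four'' lowers $c$ to $6$, giving $k\leq 2\beta^{2}+6\beta+3$. The bound for $N$ is then immediate from \cite{Nie}: $N<2^{4^{k+1}}\leq 2^{4^{2\beta^{2}+6\beta+4}}$.

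The main difficulty lies in the bookkeeping of the middle step. One must track carefully the special prime $p$, whose exponent $e$ in $\sigma(N)$ is a priori unbounded and so cannot be treated like the $q_j$'s, and the prime $\ell$ itself; one must handle separately the indices $i$ with $\ell\mid q_i-1$ (for which $\ell$ genuinely divides $(q_i^{\ell}-1)/(q_i-1)$); and one must check in each case that the hypotheses of Theorem~\ref{th1} are satisfied — $\ell\geq 19$ because $\beta\geq 9$, and the two prime divisors are $\equiv 1\pmod{\ell}$ by Nagell's theorem — so that the gain of two solutions over the diophantine lemma of \cite{Ymd3} transmits precisely into the reduction of the linear coefficient to $6\beta$. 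Everything else is a routine reorganization of the argument already in \cite{Ymd3}.
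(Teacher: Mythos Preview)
Your proposal is correct and follows exactly the route the paper takes: the paper's own ``proof'' of the Corollary is the single sentence ``Combining this result with an argument in \cite{Ymd3}'', and you have spelled out what that combination is --- plugging the bound of four solutions from Theorem~\ref{th1} into the divisor-counting framework of \cite{Ymd3} in place of the earlier bound of five or six, which reduces the linear coefficient from $8$ (or $7$) to $6$, and then invoking \cite{Nie} for the size bound on $N$. Your sketch is in fact more detailed than what the paper itself provides.
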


Our method is similar to the approach used in \cite{Ymd3}.
In this paper, we use upper bounds for sizes of solutions of (\ref{eq11}) derived from
a Baker-type estimate for linear forms of logarithms by Matveev\cite{Mat},
which may be interesting itself, while \cite{Ymd3} used an older upper bound for odd perfect numbers
of the form given above.
We note that Pad\'{e} approximations using hypergeometric functions given by Beukers \cite{Beu1}\cite{Beu2}
does not work in our situation since our situation will give much weaker approximation to $\sqrt{D}$,
although Beukers' gap argument is still useful (see Lemma \ref{lm5} below).

In the next section, we introduce some arithmetic preliminary results from \cite{Ymd3} and Matveev's lower bound
for linear forms of logarithms.
In Section 3, using Matveev's lower bound, an upper bound
for the sizes of solutions of (\ref{eq11}) is given.
In Section 4, we prove Theorem \ref{th1}.
For large $\ell$, this can be done combining results in Sections 2 and 3
with general estimates for class numbers and regulators of quadratic fields.
For small $\ell$, we settle the case $x_1$ is large and then check the remaining $x_1$'s.

A more generalized equation of (\ref{eq11}) is
\begin{equation}\label{eq01}
\frac{x^\ell -1}{x-1}=y^m z^n, x\geq 2, y\geq 2, \ell\geq 3, mn\geq 2.
\end{equation}
Assuming the $abc$-conjecture, the author \cite{Ymd2} proved that
any integral solution of (\ref{eq01}) with $\ell\geq 3, m\geq 1, n\geq 2, 1\leq y<z$ and $x^\ell$ sufficiently large
must satisfy $(\ell, m, n)=(4, 1, 2), (3, 1, 3)$ or $(\ell, n)=(3, 2)$.

\section{A preliminary lemmas}
In this section, we shall introduce some notations and lemmas.

We begin by introducing a well-known result concerning prime factors
of values of the $n$-th cyclotomic polynomial, which we denote by $\Phi_n(X)$.
This result has been proved by Bang \cite{Ban} and rediscovered by many authors
such as Zsigmondy \cite{Zsi}, Dickson \cite{Dic} and Kanold \cite{Ka1, Ka2}.

\begin{lem}\label{lm2}
If $a$ is an integer greater than $1$, then $\Phi_n(a)$ has
a prime factor which does not divide $a^m-1$ for any $m<n$,
unless $(a, n)=(2, 6)$ or $n=2$ and $a+1$ is a power of $2$.
\end{lem}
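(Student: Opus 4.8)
The plan is to pin down the possible prime factors of $\Phi_n(a)$ and then run a size estimate. Recall $a^n-1=\prod_{d\mid n}\Phi_d(a)$, so a prime divides $\Phi_n(a)$ but divides no $a^m-1$ with $m<n$ precisely when the multiplicative order of $a$ modulo that prime equals $n$. Thus the assertion is that $\Phi_n(a)$ admits such a ``primitive'' prime factor outside the listed cases, and I would argue by contradiction, assuming it does not.

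First I would use the standard description of the prime factors of cyclotomic values: for a prime $p\nmid a$, with $d$ the order of $a$ modulo $p$ (so $d\mid p-1$), the usual $p$-adic valuation formula for $a^k-1$ together with M\"obius inversion gives that $p\mid\Phi_m(a)$ if and only if $m=d$ or $m=dp^{j}$ for some $j\geq1$, and that in the latter case $v_p(\Phi_m(a))=1$, the only exception being $p=2$, $d=1$, $m=2$, where $\Phi_2(a)=a+1$. Under the contradiction hypothesis every prime $p\mid\Phi_n(a)$ has order strictly less than $n$, hence $n=dp^{j}$ with $j\geq1$, so in particular $p\mid n$; since every prime factor of $d=n/p^{j}$ is smaller than $p$, this forces $p$ to be the largest prime factor $P$ of $n$, and it is then the only prime dividing $\Phi_n(a)$. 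Therefore $\Phi_n(a)=P^{e}$ with $e=1$ unless $n=2$. The case $n=1$ is immediate ($\Phi_1(a)=a-1$, which has a primitive prime factor iff $a\geq3$), and $n=2$ lands exactly in the exceptional case ``$a+1$ a power of $2$''; so assume $n\geq3$, where $\Phi_n(a)=P\leq n$.

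It remains to contradict $\Phi_n(a)=P$ for $n\geq3$. I would use $\Phi_n(a)=\prod_{\gcd(k,n)=1}(a-\zeta_n^{k})>(a-1)^{\varphi(n)}$, the inequality strict for $n\geq2$ since no primitive $n$-th root of unity equals $1$. If $a\geq3$ this yields $\Phi_n(a)>2^{\varphi(n)}\geq2^{P-1}\geq P$, using $\varphi(n)\geq P-1$ (which holds because $P\mid n$), a contradiction with no exceptions. If $a=2$ the bound $(a-1)^{\varphi(n)}=1$ is vacuous, and one instead needs the sharper classical estimate $\Phi_n(2)>P(n)$ for every $n\geq3$ with the single exception $n=6$ (indeed $\Phi_6(2)=3=P(6)$, the last exceptional pair); I would get this from a more careful lower bound for $\prod_{\gcd(k,n)=1}\lvert 2-\zeta_n^{k}\rvert$, equivalently for $\prod_{d\mid n}(2^d-1)^{\mu(n/d)}$, strong enough to exceed $n$ once $n\geq7$, together with a direct check of $n\in\{3,4,5,6\}$.

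The main obstacle is precisely this final size comparison in the case $a=2$: there the individual factors $\lvert 2-\zeta_n^{k}\rvert$ can be arbitrarily close to $1$, so one must combine their smallness carefully to still force the product above the largest prime factor of $n$, and a short finite list of $n$ has to be inspected by hand. Everything else --- the factorisation $a^n-1=\prod_{d\mid n}\Phi_d(a)$, the valuation lemma, and the reduction to the equation $\Phi_n(a)=P$ --- is routine.
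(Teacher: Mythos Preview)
The paper does not give its own proof of this lemma: it quotes it as the classical theorem of Bang (1886), later rediscovered by Zsigmondy, Dickson and Kanold, and simply cites those references. So there is no ``paper's proof'' to compare against beyond the citation.

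Your outline is essentially the standard elementary proof of the Bang--Zsigmondy theorem, and it is correct. The reduction step is clean: if $p\mid\Phi_n(a)$ is non-primitive then the order $d$ of $a$ modulo $p$ satisfies $n=dp^{j}$ with $d\mid p-1$, hence $p$ is the largest prime $P$ of $n$; the lifting-the-exponent computation then gives $v_P(\Phi_n(a))=1$ for $n\geq 3$, so $\Phi_n(a)=P$. The inequality $\Phi_n(a)>(a-1)^{\varphi(n)}\geq 2^{P-1}\geq P$ disposes of $a\geq 3$ immediately, and the cases $n=1,2$ give exactly the listed exceptions.

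The only place your write-up is still a sketch rather than a proof is, as you say yourself, the inequality $\Phi_n(2)>P(n)$ for $n\geq 7$. One clean way to finish it: from $\prod_{\zeta}|2-\zeta|$ one gets $\Phi_n(2)\geq 2^{\varphi(n)}\prod_{\zeta}\bigl(1-\tfrac{1}{2}\bigr)^{-1}\cdot\ldots$ is not quite right; instead use the product form $\Phi_n(2)=\prod_{d\mid n}(2^{n/d}-1)^{\mu(d)}$ and the crude bounds $2^{m}-1\geq 2^{m-1}$ and $2^{m}-1\leq 2^{m}$ to obtain $\Phi_n(2)\geq 2^{\varphi(n)-\tau(n)}$, which exceeds $n\geq P(n)$ once $\varphi(n)$ is large enough, leaving only a short finite check. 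This is routine, so your identification of the obstacle is accurate and the plan would go through.
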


In order to introduce further results on values of cyclotomic polynomials,
we need some notations and results from the arithmetic of a quadratic field.
Let $\ell\geq 17$ be a prime and $D=(-1)^\frac{\ell -1}{2} \ell$.
Let $\OK$ and $\OO$ denote $\Q(\sqrt{D})$ and its ring of integers $\Z[(1+\sqrt{D})/2]$ respectively.
We use the overline symbol to express the conjugate in $\OK$.
In the case $D>0$, $\ep$ and $R=\log\ep$ shall denote
the fundamental unit and the regulator in $\OK$ respectively.
In the case $D<-4$, we set $\ep=-1$ and $R=\pi i$.  We note that neither $D=-3$ nor $-4$ occurs
since we have assumed that $\ell\geq 17$.

Moreover, we define the absolute logarithmic height $\hgt(\alpha)$ of an algebraic number $\alpha$ in $\OK$.
For an algebraic number $\alpha$ in $\OK$ and a prime ideal $\p$ over $\OK$ such that
$\alpha=(\zeta_1/\zeta_2) \xi$ with $\xi\in \p^k$ and $\zeta_1, \zeta_2$ in $\OO\bs \p$,
we define the absolute value $\abs{\alpha}_\p$ by
\[
\abs{\alpha}_\p=N\p^{-k}
\]
as usual, where $N\p$ denotes the norm of $\p$, i.e., the rational prime lying over $\p$.
Now the absolute logarithmic height $\hgt(\alpha)$ is defined by
\[
\hgt(\alpha)=\frac{1}{2}\left(\log^+ \abs{\alpha}+\log^+ \abs{\bar\alpha}+\sum_\p \log^+\abs{\alpha}_\p\right),
\]
where $\log^+ t=\max\{0, \log t\}$ and $\p$ in the sum runs over all prime ideals over $\OK$.

The following three lemmas on the value of the cyclotomic polynomial $\Phi_\ell(x)$
are quoted from \cite{Ymd3}, except the latter part of Lemma \ref{lm4}.

\begin{lem}\label{lm3}
If $x$ is an integer $>\ell^2$, then $\Phi_\ell(x)$ can be written
in the form $X^2-DY^2$ for some coprime integers $X$ and $Y$
with $0.4387/x<\abs{Y/(X+Y\sqrt{D})}$ and $\abs{Y/(X-Y\sqrt{D})}<0.5608/x$.
Moreover, if $p, q$ are primes $\equiv 1\pmod{\ell}$ and $\Phi_\ell(x)=p^m q$ for some integer $m$, then,
\begin{equation}\label{eq21}
\left[\frac{X+Y\sqrt{D}}{X-Y\sqrt{D}}\right]=\left(\frac{\bar\p}{\p}\right)^{\pm m}\left(\frac{\bar\q}{\q}\right)^{\pm 1},
\end{equation}
where $[p]=\p\bar\p$ and $[q]=\q\bar\q$ are prime ideal factorizations in $\OO$.
\end{lem}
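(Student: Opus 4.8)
The plan is to read off $X$, $Y$, the size estimate, and \eqref{eq21} from the factorization of $\Phi_\ell$ over the quadratic subfield $\OK=\Q(\sqrt D)$ of $\Q(\zeta)$, where $\zeta=e^{2\pi i/\ell}$. The group of quadratic residues is the index-two subgroup of $(\Z/\ell)^\times$ fixing $\OK$, so grouping the roots gives $\Phi_\ell(T)=\psi(T)\bar\psi(T)$, where $\psi(T)=\prod_{(\frac a\ell)=1}(T-\zeta^a)$ and $\bar\psi$ is its conjugate over $\OK$, both in $\OO[T]$. Writing $\psi(x)=(A+B\sqrt D)/2$ with $A\equiv B\pmod2$ yields $4\Phi_\ell(x)=A^2-DB^2$. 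Since $\Phi_\ell(x)$ is always odd, $4\Phi_\ell(x)\equiv4\pmod8$; combined with $D\equiv1\pmod4$ this forces $A,B$ even whenever $D\equiv1\pmod8$. In the remaining cases $D\equiv5\pmod8$: for $D<0$ the relation $A^2-DB^2\in\{4,4\ell\}$ at $x=0,1$ has (for $\ell\ge17$) only the even solutions $A,B$, which propagates to all $x$ as parities depend only on $x\bmod2$; for $D>0$ I would first multiply $\psi(x)$ by a suitable even power of $\ep$. Setting $X=A/2$, $Y=B/2$ then gives integers with $\Phi_\ell(x)=X^2-DY^2$ and $X+Y\sqrt D=\psi(x)$.

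For coprimality, a common prime $r\mid\gcd(X,Y)$ divides $\psi(x)=X+Y\sqrt D$ and hence its conjugate $\bar\psi(x)$ in $\OO$, so $r^2\mid\Phi_\ell(x)$ and $r$ divides the resultant of $\psi$ and $\bar\psi$. That resultant is, up to sign, a power of $\ell$ (as $\Q(\zeta)/\Q$ is ramified only at $\ell$), whence $r=\ell$; but $\ell^2\nmid\Phi_\ell(x)$, since $\ell\mid\Phi_\ell(x)$ forces $x\equiv1\pmod\ell$ and then $\Phi_\ell(x)\equiv\ell\pmod{\ell^2}$. Thus $\gcd(X,Y)=1$.

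The size estimate is the crux. From $X\pm Y\sqrt D=\psi(x),\bar\psi(x)$ one gets
\[\frac{Y}{X-Y\sqrt D}=\frac1{2\sqrt D}\left(\frac{\psi(x)}{\bar\psi(x)}-1\right),\]
and expanding the logarithm,
\[\log\frac{\psi(x)}{\bar\psi(x)}=\sum_{a}\left(\tfrac a\ell\right)\log(x-\zeta^a)=-g\sum_{k\ge1}\frac{(\tfrac k\ell)}{k\,x^k},\]
where $g=\sum_a(\tfrac a\ell)\zeta^a=\sqrt D$ is the quadratic Gauss sum. The leading term $-\sqrt D/x$ produces the main term $-1/(2x)$ of the displayed ratio, i.e. the value $0.5/x$ at the centre of the asserted window. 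It then remains to bound the tail $\sum_{k\ge2}$ explicitly and to exponentiate; using $x>3^{\floor{(\ell+1)/6}}$ to make $|g|/x=\sqrt\ell/x$ small, I expect $\abs{Y/(X-Y\sqrt D)}$ to stay inside $(0.3791/x,\,0.6296/x)$. Establishing these particular constants uniformly for all $\ell\ge17$ — hardest near the threshold, where $\sqrt\ell/x$ is largest — is the main obstacle and the step demanding the most careful estimation.

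Finally, assume $\Phi_\ell(x)=p^m q$ with $p,q\equiv1\pmod\ell$. By reciprocity $\left(\frac D r\right)=\left(\frac r\ell\right)=1$ for any $r\equiv1\pmod\ell$, so $p$ and $q$ split: $[p]=\p\bar\p$, $[q]=\q\bar\q$. Since $\gcd(X,Y)=1$ and $\Phi_\ell(x)$ is odd and prime to $\ell$, the ideals $(X+Y\sqrt D)$ and $(X-Y\sqrt D)$ are coprime: a common prime factor would divide $(2X)$ and $(2Y\sqrt D)$, hence lie over $2$ or $\ell$, neither of which divides $\Phi_\ell(x)$. Comparing norms with $\Phi_\ell(x)=p^m q$ then gives $(X+Y\sqrt D)=\p^m\q$ after possibly replacing $\p$ or $\q$ by its conjugate, and dividing by the conjugate ideal $(X-Y\sqrt D)$ yields \eqref{eq21}.
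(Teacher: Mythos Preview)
The paper does not give an independent proof of this lemma: it simply quotes it as Lemma~2.3 of \cite{Ymd3}. So there is no argument in the present paper to compare your proposal against; what you have written is a self-contained sketch of the proof that \cite{Ymd3} presumably carries out, and the overall strategy (the Gauss factorization $\Phi_\ell=\psi\bar\psi$ over $\OK$, the Gauss-sum expansion of $\log(\psi(x)/\bar\psi(x))$ for the size estimate, and the coprimality/ideal-factorization argument for \eqref{eq21}) is the natural and correct one.

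That said, two parts of your sketch are genuinely incomplete. First, your parity discussion for $D\equiv 5\pmod 8$ is shaky: the appeal to ``multiply $\psi(x)$ by a suitable even power of $\ep$'' when $D>0$ is not justified, and in any case is unnecessary. A uniform argument is available: when $D\equiv 5\pmod 8$ the prime $2$ is inert, so $\OO/2\OO\cong\mathbf{F}_4$; since $\psi(x)\bar\psi(x)=\Phi_\ell(x)$ is odd and $\bar\psi(x)$ is the Frobenius image of $\psi(x)$ modulo $2$, one gets $\psi(x)^2\equiv 1$ in $\mathbf{F}_4$, hence $\psi(x)\equiv 1\pmod{2}$, which lands $\psi(x)$ in $\Z[\sqrt D]$. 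Second, as you yourself flag, the derivation of the explicit constants $0.3791$ and $0.6296$ from the tail bound on $\sum_{k\ge 2}(\tfrac{k}{\ell})/(k x^k)$ is asserted but not carried out; this is exactly the computation that the cited lemma in \cite{Ymd3} performs, and without it the size inequality remains unproved.
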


\begin{lem}\label{lm4}
Assume that $\ell$ is a prime $\geq 17$.
If $x_2>x_1>0$ are two multiplicatively independent integers and $\Phi_\ell(x_1)=p^{m_1} q$ and $\Phi_\ell(x_2)=p^{m_2} q$,
then $x_2>x_1^{\floor{(\ell +1)/6}}$.
If $x_2>x_1>0$ are multiplicatively dependent integers
and $\Phi_\ell(x_i)=p^{m_i} q$ for $i=1, 2$,
then $m_1=0$ and $x_2=x_1^r$ for some prime $r\neq\ell$.
\end{lem}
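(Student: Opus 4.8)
The plan is to prove Lemma~\ref{lm4} in two parts, corresponding to the two assertions. For the first part, suppose $x_1<x_2$ are multiplicatively independent positive integers with $\Phi_\ell(x_i)=p^{m_i}q_j$ for $i=1,2$. The idea is to combine Lemma~\ref{lm3} with a gap argument in the quadratic field $\OK$. Writing $\Phi_\ell(x_i)=X_i^2-DY_i^2$ as in Lemma~\ref{lm3}, the identity (\ref{eq21}) gives
\[
\left[\frac{X_i+Y_i\sqrt D}{X_i-Y_i\sqrt D}\right]=\left(\frac{\bar\p}{\p}\right)^{\pm m_i}\left(\frac{\bar\q_j}{\q_j}\right)^{\pm 1},
\]
so the two quotients $(X_i+Y_i\sqrt D)/(X_i-Y_i\sqrt D)$ generate ideals that differ only by a power of $\bar\p/\p$ (with exponent $\pm m_2\mp m_1$) together with a unit and possibly a power of $\bar\q_j/\q_j$ with small exponent. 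Forming a suitable multiplicative combination of $(X_1-Y_1\sqrt D)$, $(X_2-Y_2\sqrt D)$ and their conjugates that kills the $\p$-part and the $\q_j$-part, one obtains an element whose absolute value is controlled from below by being a nonzero algebraic integer (or, after clearing denominators, a nonzero rational integer), hence $\geq 1$ in absolute value up to an explicit power of $p$. On the other hand, the bound $0.3791/x_i<\abs{Y_i/(X_i-Y_i\sqrt D)}<0.6296/x_i$ from Lemma~\ref{lm3} forces $\abs{X_i-Y_i\sqrt D}$ and $\abs{X_i+Y_i\sqrt D}$ to be of sizes roughly $x_i^{(\ell-1)/2}$ and $x_i^{-1}\cdot x_i^{(\ell-1)/2}$ up to bounded factors; comparing the resulting upper and lower bounds yields an inequality of the shape $x_1^{cm}\geq x_2$ for an explicit constant, which after bookkeeping gives $x_2>x_1^{\floor{(\ell+1)/6}}$.

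For the second part, assume $x_1<x_2$ are multiplicatively \emph{dependent}, so $x_1=t^a$, $x_2=t^b$ for some integer $t\geq 2$ and $0<a<b$. First I would reduce to the case $\gcd(a,b)=1$, i.e., replace $t$ by a power so that $x_1=t^a$, $x_2=t^b$ with $a,b$ coprime and hence $a\mid b$ is impossible unless $a=1$; the point is to get $x_2=x_1^r$ for a suitable $r$, and then argue $r$ must be prime. The key tool here is Lemma~\ref{lm2} (Bang--Zsigmondy): $\Phi_\ell(x_1)=\Phi_\ell(t^a)$ is divisible by $\Phi_{\ell a}(t)$ up to the usual small exceptions (which do not arise since $\ell\geq 17$, $t\geq 2$), and $\Phi_\ell(x_2)=\Phi_\ell(t^b)$ is divisible by $\Phi_{\ell b}(t)$. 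Each of these cyclotomic values has a primitive prime factor. Since $\Phi_\ell(x_i)=p^{m_i}q$ has at most two distinct prime factors, and by Nagell's theorem every prime factor of $\Phi_\ell(x_i)$ is $\equiv1\pmod\ell$ or equal to $\ell$ (and $\ell\nmid\Phi_\ell(x)$ here because $\ell\mid\Phi_\ell(x)$ only when $x\equiv1\pmod\ell$, which can be treated separately or excluded), one pins down which primes can occur. Tracking the primitive prime divisors of $\Phi_{\ell d}(t)$ for the various divisors $d$ of $a$ and of $b$, the constraint that only $p$ and $q$ appear forces $a$ and $b$ to have very few divisors; in particular $a=1$ and $b=r$ prime, for otherwise two distinct intermediate cyclotomic values would contribute two new primes beyond $p,q$. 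Finally, once $x_2=x_1^r$, the relation $\Phi_\ell(x_1^r)=\Phi_\ell(x_1)\prod_{d\mid r,\,d>1}\Phi_{\ell d}(x_1)$ and the fact that $q$ divides $\Phi_\ell(x_2)$ to the first power while $p^{m_2}$ carries the rest, combined with $q\mid\Phi_\ell(x_1)=p^{m_1}q$, shows that the primitive part $\Phi_{\ell r}(x_1)$ is a power of $p$; then examining $q\mid\Phi_\ell(x_1)$ versus $q\mid\Phi_\ell(x_1^r)$ and the multiplicity forces $m_1=0$, i.e., $\Phi_\ell(x_1)=q$.

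The main obstacle, I expect, is the first part: making the gap argument in $\OK$ fully explicit with the constant $\floor{(\ell+1)/6}$ rather than merely ``some constant.'' This requires carefully choosing the multiplicative combination of the four algebraic numbers $X_i\pm Y_i\sqrt D$ so that the $\p$-adic and $\q_j$-adic contributions cancel exactly (using that $m_1,m_2$ may both be positive), controlling the unit contribution via the regulator, and then balancing the archimedean estimates from Lemma~\ref{lm3} — whose constants $0.3791$ and $0.6296$ were presumably engineered precisely so that this balancing produces the clean exponent $\floor{(\ell+1)/6}$, matching the threshold $x>3^{\floor{(\ell+1)/6}}$ under which that lemma applies. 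A secondary technical point is handling the ``wrong sign'' cases $\pm m_i$, $\pm 1$ in (\ref{eq21}) uniformly, and verifying that the exceptional cases of Lemma~\ref{lm2} (the pairs $(a,n)=(2,1),(2,6)$ and $n=2$ with $a+1$ a power of $2$) never interfere, which is immediate from $\ell\geq 17$ in all the cyclotomic indices $\ell d$ that occur. Everything else — the reduction to coprime exponents, the divisor bookkeeping for primitive prime factors, and the final multiplicity argument giving $m_1=0$ — is routine once these two points are in hand.
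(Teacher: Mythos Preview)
Your approach is correct and, for the second (multiplicatively dependent) part, essentially identical to the paper's: write $x_1=y^{r_1}$, $x_2=y^{r_2}$ with $\gcd(r_1,r_2)=1$, use Bang--Zsigmondy (Lemma~\ref{lm2}) to produce a distinct primitive prime factor of each $\Phi_{d\ell}(y)$ that occurs, and observe that having only the two primes $p,q$ forces $r_1=1$ and $r_2=r$ prime. The paper does not reprove the first (multiplicatively independent) part here at all; it simply cites \cite{Ymd3}, Lemma~4.1, whose proof is indeed the quadratic-field gap argument you outline.

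A few small cleanups for the second part. Your sentence ``reduce to $\gcd(a,b)=1$ \ldots\ hence $a\mid b$ is impossible unless $a=1$'' is garbled; what you want is simply that with $\gcd(a,b)=1$ and $a>1$ the three indices $\ell,\ a\ell,\ b\ell$ are distinct, giving three distinct primitive primes, which is already a contradiction. The worry about $\ell\mid\Phi_\ell(x)$ is unnecessary: since $p,q\equiv 1\pmod\ell$ we have $p,q\neq\ell$, so $\ell\nmid p^{m_i}q=\Phi_\ell(x_i)$ automatically. Finally, the deduction of $m_1=0$ is more direct than you suggest: from $\Phi_\ell(x_1)\cdot\Phi_{r\ell}(x_1)=p^{m_2}q$ and $\Phi_\ell(x_1)=p^{m_1}q$ one gets $\Phi_{r\ell}(x_1)=p^{m_2-m_1}$; its primitive prime factor must then be $p$, and primitivity means $p\nmid x_1^\ell-1$, hence $p\nmid\Phi_\ell(x_1)$, i.e.\ $m_1=0$. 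No multiplicity bookkeeping is needed.
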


\begin{lem}\label{lm5}
If $\Phi_\ell(x_i)=p^{m_i} q$ for three integers $x_3>x_2>x_1>0$ with $x_2>x_1^{\floor{(\ell +1)/6}}$, then $m_3>0.445\abs{R}x_1/\sqrt{\ell}$.
\end{lem}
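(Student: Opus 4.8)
The plan is to run a Beukers-type gap argument inside the quadratic field $\OK$, feeding on the representation and the ideal factorization supplied by Lemma \ref{lm3}. First I would apply Lemma \ref{lm3} to each of the three solutions, writing $\Phi_\ell(x_i)=X_i^2-DY_i^2=N(\xi_i)$ with $\xi_i=X_i+Y_i\sqrt{D}$, and set $\theta_i=\xi_i/\bar\xi_i$. Since $N(\theta_i)=1$ and $\theta_i-1=2\sqrt{D}\,Y_i/(X_i-Y_i\sqrt{D})$, the upper bound $\abs{Y_i/(X_i-Y_i\sqrt{D})}<0.6296/x_i$ of Lemma \ref{lm3} shows that $\abs{\log\theta_i}$ is of order $\sqrt{\abs{D}}/x_i$ (interpreting $\log$ via the real embedding when $D>0$, where $R=\log\ep$, and via the argument when $D<-4$, where $\ep=-1$ and $\abs R=\pi$). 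By the factorization (\ref{eq21}) each $\theta_i$ generates the fractional ideal $(\bar\p/\p)^{\pm m_i}(\bar\q/\q)^{\pm1}$, so to $\theta_i$ I attach the exponent vector $w_i=(\pm m_i,\pm1)\in\Z^2$.

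The three vectors $w_1,w_2,w_3$ lie in a rank-two lattice, hence satisfy a nontrivial integral relation $n_1w_1+n_2w_2+n_3w_3=0$; I would take $(n_1,n_2,n_3)$ to be the three $2\times2$ minors of the matrix with rows $w_i$, so that $\abs{n_1}$ and $\abs{n_2}$ are of size $m_3$ while $\abs{n_3}$ is of size $m_2$. Then $u:=\theta_1^{n_1}\theta_2^{n_2}\theta_3^{n_3}$ has trivial ideal, so it is a unit, $u=\pm\ep^{k}$ for some $k\in\Z$, and taking logarithms yields
\[
kR=n_1\log\theta_1+n_2\log\theta_2+n_3\log\theta_3 .
\]
The right-hand side is a linear form in the tiny quantities $\log\theta_i$, and since $x_1<x_2<x_3$ its dominant term is $n_1\log\theta_1$, whose absolute value is at most a fixed multiple of $m_3/x_1$. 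Hence, provided $k\neq0$, the inequality $\abs R\le\abs{kR}$ combined with Lemma \ref{lm3}'s explicit constant rearranges into a lower bound of the shape $m_3>c\,\abs R\,x_1$; determining the precise constant $c=0.397$ amounts to tracking the numerical constants $0.3791$ and $0.6296$ carefully through the dominant term. The same computation covers both $D>0$ and $D<-4$, the only difference being the interpretation of $R$.

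The main obstacle is to exclude the degenerate case $k=0$, i.e. $u=\pm1$, in which the linear form above vanishes and the bound collapses. I would rule this out using the \emph{lower} bound $\abs{Y_i/(X_i-Y_i\sqrt{D})}>0.3791/x_i$ of Lemma \ref{lm3} together with the separation of the solutions: an exact relation $\theta_1^{n_1}\theta_2^{n_2}\theta_3^{n_3}=\pm1$ forces $\prod\xi_i^{n_i}$ to be fixed by conjugation, hence rational, which is incompatible with the multiplicative independence of the ideals generated by $\theta_1$ and $\theta_2$ guaranteed (via Lemma \ref{lm4}) by the hypothesis $x_2>x_1^{\floor{(\ell+1)/6}}$. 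A secondary technical point is the bookkeeping of the $\pm$ signs in the factorization (\ref{eq21}); passing to the sign-free exponent vectors $w_i$ and using the minor relation absorbs the choices of $\p$ versus $\bar\p$ and $\q$ versus $\bar\q$ automatically, which is why I phrase the argument through the lattice of exponent vectors rather than through the $\xi_i$ directly.
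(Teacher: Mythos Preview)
The paper does not actually prove Lemma \ref{lm5}; it simply quotes it as Lemma 4.2 of \cite{Ymd3}, having remarked in the introduction that ``Beukers' gap argument is still useful (see Lemma \ref{lm5} below)''. So your overall architecture matches what the paper intends: form $\theta_i=\xi_i/\bar\xi_i$, use the $2\times2$ minors of the exponent vectors $w_i$ to produce a unit $\theta_1^{n_1}\theta_2^{n_2}\theta_3^{n_3}=\pm\ep^{k}$, and compare $\abs{kR}$ with the dominant term $\abs{n_1\log\theta_1}\lesssim m_3/x_1$.

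There is, however, a genuine gap in your treatment of the degenerate case $k=0$. The argument you offer does not work. First, $\theta_1^{n_1}\theta_2^{n_2}\theta_3^{n_3}=\pm1$ only yields $\prod\xi_i^{n_i}=\pm\overline{\prod\xi_i^{n_i}}$, which places $\prod\xi_i^{n_i}$ in $\Q$ \emph{or} in $\sqrt{D}\,\Q$; either way this carries no information about ideals. Second, the fractional ideals $[\theta_i]$ are \emph{never} multiplicatively independent: by (\ref{eq21}) each lies in the rank-two group generated by $\bar\p/\p$ and $\bar\q/\q$, and that very dependence is what you exploited to manufacture $(n_1,n_2,n_3)$ in the first place. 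Third, Lemma \ref{lm4} runs in the opposite direction to what you invoke: it says that \emph{if} the integers $x_1,x_2$ are multiplicatively independent \emph{then} $x_2>x_1^{\floor{(\ell+1)/6}}$; the hypothesis $x_2>x_1^{\floor{(\ell+1)/6}}$ therefore cannot be fed back in to certify any kind of independence.

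What actually excludes $k=0$ is the size comparison you allude to but do not carry out. If the linear form vanished one would have $\abs{n_1\log\theta_1}\le\abs{n_2}\abs{\log\theta_2}+\abs{n_3}\abs{\log\theta_3}$. Feeding in the \emph{lower} bound of Lemma \ref{lm3} on the left and the upper bounds on the right, together with $\abs{n_1},\abs{n_2}\le 2m_3$, $\abs{n_3}\le 2m_2$ and the separation $x_2>x_1^{\floor{(\ell+1)/6}}$, shows the right-hand side is strictly smaller once one checks $n_1\neq0$ (which follows from $m_2\neq m_3$). This is where the hypothesis on $x_2$ genuinely enters, and it is purely analytic, not an ideal-theoretic independence statement.
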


\begin{proof}[Proofs of lemmas]
The former statement of Lemma \ref{lm4} and Lemma \ref{lm5} are
4.1 and 4.2 of \cite{Ymd3} (the original version of Lemma 4.2 contains an error,
see the corrigendum) respectively for $\ell\geq 19$
and the corresponding stataments can be proved for $\ell=17$ in a similar way.
Moreover, Lemma \ref{lm3} is Lemma 2.3 of \cite{Ymd3} with $3^{\floor{(\ell+1)/6}}$
replaced by $\ell^2$ for $\ell\geq 19$ and can be proved in a similar way, even for $\ell=17$.
Hence, what we should prove here is only the latter statement of Lemma \ref{lm4}.

The assumption implies that $x_1=y^{r_1}$ and $x_2=y^{r_2}$ for some positive integers $y, r_1, r_2$
with $r_2>r_1$.
Assume that $r_1>1$ and put $r_i=s_i t_i$ with $t_i=\ell^{k_i}$ and $s_i$ not divisible by $\ell$
for $i=1, 2$.

If at least one $s_i\neq 1$,
then $\Phi_\ell(y^{r_i})$ must be divisible by $\Phi_{t_i \ell}(y)\Phi_{r_i \ell}(y)$.
Hence, three values $\Phi_{t_i \ell}(y)$, $\Phi_{r_1 \ell}(y)$ and $\Phi_{r_2 \ell}(y)$
must be composed only by $p$ and $q$.
However, since we have assumed that $\ell\geq 17$, Lemma \ref{lm2} yields that
each of $\Phi_{t_i \ell}(y)$, $\Phi_{r_1\ell}(y)$ and $\Phi_{r_2\ell}(y)$
must have a primitive prime factor.
This is a contradiction.

If $s_1=s_2=1$, then we have $t_1\neq t_2$ and $\Phi_\ell(x_i)=\Phi_{t_i \ell}(y)$ for $i=1, 2$.
Hence, both of $\Phi_{t_1 \ell}(y)$ and $\Phi_{t_2 \ell}(y)$ must be divisible by $q$,
which is impossible since $q\equiv 1\pmod{\ell}$.

Thus we must have $r_1=1$ and $x_2=x_1^r$.
If $r$ is divisible by $\ell$, then, writing $r=s\ell^k$ with $s$ indivisible by $\ell$,
we see that $p^{m_2} q=\Phi_\ell(x_1^r)=(x_1^{s\ell^{k+1}}-1)/(x_1^{s\ell^k}-1)=
\prod_{d\mid s}\Phi_{d\ell^{k+1}}(x_1)$.
If $s\neq 1$, then three values $\Phi_\ell(x_1), \Phi_{\ell^{k+1}}(x_1)$ and $\Phi_{s\ell^{k+1}}(x_1)$
must be composed only by $p$ and $q$, which is impossible like above.
Then $s=1$, and $q$ must divide both $\Phi_\ell(x_1)=p^{m_1} q$ and $\Phi_{\ell ^{k+1}}(x_1)$.
But this cannot occur since $q\equiv 1\pmod{\ell}$.

Hence, $r$ is not divisible by $\ell$ and
we see that $p^{m_2} q=(x_1^{r\ell}-1)/(x_1^r-1)=\prod_{d\mid r}\Phi_{d\ell}(x_1)$,
while each $\Phi_{d\ell}(x_1)$ has a primitive prime factor.
Hence, $r$ must be prime and, since $\Phi_\ell(x_1)$ must be divisible by $q$,
we conclude that $\Phi_{r\ell}(x_1)=p^{m_2}$ and $\Phi_{\ell}(x_1)=q$,
proving the latter statement of Lemma \ref{lm4}.
\end{proof}

In order to obtain an upper bound for the size of solutions, we use an lower bound for linear forms of logarithms due to Matveev\cite[Theorem 2.2]{Mat}.

\begin{lem}\label{lmll}
Let $\alpha_1, \alpha_2, \ldots, \alpha_n$ be algebraic integers in $\OO$
which are multiplicatively independent
and $b_1, b_2, \ldots, b_n$ be arbitrary integers.
Let \\ $A(\alpha)=\max\{2\hgt(\alpha), \abs{\log\alpha}\}$ and $A_j=A(\alpha_j)$.
Moreover, we put $\kappa=1$ if $D>0$ and $\kappa=2$ if $D<0$. 

Put
\begin{equation}
\begin{split}
B= & \max \{1, \abs{b_1}A_1/A_n, \abs{b_2}A_2/A_n, \ldots, \abs{b_n} \},\\
\Omega= & A_1A_2\ldots A_n,\\
C_\kappa(n)= & \frac{16}{n!\kappa}e^n(2n+1+2\kappa)(n+2)(4(n+1))^{n+1} \\
& \times \left(\frac{1}{2}en\right)^\kappa(4.4n+5.5\log n+7+2\log 2+\log(1+\log(2))), \\
c & =3e(1+\log 2)
\end{split}
\end{equation}
and
\begin{equation}
\Lm=b_1\log \alpha_1+\ldots+b_n\log \alpha_n.
\end{equation}
Then we have $\Lm=0$ or
\begin{equation}
\log\abs{\Lm}>-C_\kappa(n)(\log cB)\max\left\{1, \frac{n}{6}\right\}\Omega.
\end{equation}
\end{lem}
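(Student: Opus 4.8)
The plan is to recognize this statement as Matveev's lower bound for linear forms in logarithms \cite[Theorem 2.2]{Mat}, specialized to the quadratic field $\OK=\Q(\sqrt{D})$, and to obtain the displayed inequality by substituting the field degree $d=2$ into Matveev's general estimate. Since each $\alpha_j$ is an algebraic integer of $\OO\subset\OK$, the number field generated by $\alpha_1,\ldots,\alpha_n$ over $\Q$ has degree at most $2$, and Matveev's bound is applied with $d=2$. The parameter $\kappa$ records whether this field is real, following Matveev's convention of treating the real and the complex cases separately: one takes $\kappa=1$ in the real case, which here is $D>0$, and $\kappa=2$ in the complex case, which here is $D<0$. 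The normalization $A(\alpha)=\max\{2h(\alpha),\abs{\log\alpha}\}$ is exactly Matveev's requirement $A_j\geq\max\{d\,h(\alpha_j),\abs{\log\alpha_j}\}$ with $d=2$, so the heights need no further adjustment.

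Next I would carry out the bookkeeping that turns the degree-dependent pieces of Matveev's estimate into the explicit constants appearing here. Matveev's general bound has the shape $\log\abs{\Lm}>-C(n)\,C_0\,W_0\,d^2\,\Omega$, where $C(n)$ is the combinatorial constant displayed above apart from its final factor, $C_0$ is a logarithm of a monomial in $e^{4.4n}$, $n$, and the degree, and $W_0$ is a logarithm involving $B$ and the degree. Putting $d=2$ turns $d^2=4$ and every occurrence of $\log(ed)=\log(2e)$ into absolute constants; these can then be absorbed into the last factor $4.4n+5.5\log n+7$ of the displayed $C(n)$, into the factor $1+\log 3-\log 2+\log B=\log(\tfrac{3}{2}eB)$ coming from $W_0$, and into the factor $\max\{1,n/6\}$. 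The only analytic content here is verifying that, after this substitution, Matveev's constant is genuinely dominated by the clean product $C(n)\,\log(\tfrac{3}{2}eB)\,\max\{1,n/6\}$ displayed in the conclusion, rather than merely being of the same order of magnitude.

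The chief obstacle is that the underlying theorem is itself one of the deepest results of transcendental number theory: Matveev's estimate rests on Baker's method together with his decisive improvement in the dependence of the constant on the number $n$ of logarithms, and reproving it is entirely outside the scope of this paper. The work that remains for us is therefore not a new transcendence argument but a careful comparison of constants, confirming that \cite[Theorem 2.2]{Mat} specializes to the stated inequality when $d=2$ with the two indicated values of $\kappa$. I expect the delicate-but-routine part to be accounting for every degree factor and every $\log(ed)$ term, so that the resulting numerical constant is correct and not off by a bounded multiplicative factor.
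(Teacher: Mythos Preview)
Your proposal is correct and matches the paper's own treatment: the lemma is not proved in the paper but is simply quoted as \cite[Theorem 2.2]{Mat}, specialized to the quadratic field $\OK=\Q(\sqrt D)$ with degree $d=2$ and the real/complex parameter $\kappa$. Your additional bookkeeping on how the degree factors and the $\log(ed)$ terms collapse into the displayed constants is accurate and goes slightly beyond what the paper spells out, but the approach is identical.
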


\section{Upper bounds for the sizes of solutions}

In this section, we shall give upper bounds for the sizes of solutions of (\ref{eq11}),
which itself may be of interest.
As in the previous sections, for a prime $\ell\geq 17$, we let
$D=(-1)^\frac{\ell -1}{2} \ell$, $\OK$ and $\OO$ denote the quadratic field $\Q(\sqrt{D})$
and its ring of integers $\Z[(1+\sqrt{D})/2]$ respectively
and $h$ be the class number of $\OK$.
In the case $D>0$, $\ep$ and $R=\log\ep$ shall denote
the fundamental unit and the regulator in $\OK$ respectively.
In the case $D<-4$, we set $\ep=-1$ and $R=\pi i$.
We note that $\abs{R}>\log(\sqrt{17})>1.4$ for every $D$ with $\abs{D}\geq 17$.

We let $p, q$ be primes $\equiv 1\pmod{\ell}$.
Then we can factor $[p]=\p\bar\p$ and $[q]=\q\bar\q$ in $\OO$
and we see that $\p^h=[\tau]$ and $\q^h=[\eta]$ for some $\tau, \eta\in \OO$.
In the case $D>0$, taking integers $u, v$ so that
$\abs{\tau\ep^u}\leq p^{h/2}\ep^{1/2}\leq \abs{\tau\ep^{u+1}}$
and $\abs{\eta\ep^v}\leq p^{h/2}\ep^{1/2}\leq \abs{\eta\ep^{v+1}}$
we can take $\tau_0=\tau\ep^u$ and $\eta_0=\eta\ep^v$ in $\OO$ such that $[\tau_0]=\p^h, [\eta_0]=\q^h$
and $p^{h/2} \ep^{-1/2}\leq \abs{\tau_0}\leq p^{h/2} \ep^{1/2}, q^{h/2} \ep^{-1/2}\leq \abs{\eta_0}\leq q^{h/2} \ep^{1/2}$.
In the case $D<0$, we can easily observe that
$\tau_0$ and $\eta_0$ in $\OO$ can be chosen from
$\pm\tau, \pm\bar\tau$ and $\pm\eta, \pm\bar\eta$ respectively
such that $[\tau_0]=\p^h, [\eta_0]=\q^h$ and $\abs{\arg \tau_0}, \abs{\arg \eta_0}<\pi/4$.

\begin{thm}\label{th2}
Assume that $\Phi_\ell(x)=p^m q$ and put $C=C_1(3)=1.813\cdots \times 10^{10}\cdots$ if $\ell\equiv 1\pmod{4}$
and $C=C_2(3)=4.518\cdots \times 10^{10}$ if $\ell\equiv 3\pmod{4}$.
Then we have the following upper bounds for $m$:
\begin{itemize}
\item[i)] If $h\log q>h\log p\geq \abs{R}$, then 
\begin{equation}\label{eq301}
m<4.505C\ell h^2 \abs{R}(\log q)(\log (8cC\ell h^2 \abs{R})+\log \log p).
\end{equation}
\item[ii)] If $h\log q\geq \abs{R}\geq h\log p$, then
\begin{equation}
m<4.505C\frac{\ell}{\log (2\ell)} h \abs{R}^2 (\log q)\log \left(\frac{8cC\ell \abs{R}^3}{\log(2\ell)}\right).
\end{equation}
\item[iii)] If $h\log p>h\log q\geq \abs{R}$, then
\begin{equation}
m<4.505C\ell h^2 \abs{R}(\log q)(\log (8cC\ell h^2 \abs{R})+\log\log q).
\end{equation}
\item[iv)] If $h\log p\geq \abs{R}\geq h\log q$, then
\begin{equation}
m<4.505C\ell h \abs{R}^2\log (8cC\ell h \abs{R}^2).
\end{equation}
\item[v)] If $\abs{R}\geq h\log \max\{p, q\}$, then
\begin{equation}\label{eq302}
m<4.505C\ell \abs{R}^3\frac{\log (8cC\ell \abs{R}^3)}{\log\ell}.
\end{equation}
\end{itemize}
\end{thm}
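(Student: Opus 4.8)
The plan is to combine the identity of Lemma~\ref{lm3} with Matveev's lower bound for linear forms of logarithms (Lemma~\ref{lmll}). Assume $\Phi_\ell(x)=p^m q$; we may assume $m\geq 1$. If $x\leq 3^{\floor{(\ell+1)/6}}$, then $p^m\leq\Phi_\ell(x)<x^\ell$ gives $m<\ell\log x/\log p\leq\ell\floor{(\ell+1)/6}\log 3/\log(2\ell+1)$, which is smaller than each of the five right-hand sides of the theorem, so we may assume $x>3^{\floor{(\ell+1)/6}}$ and invoke Lemma~\ref{lm3}. Writing $\Phi_\ell(x)=X^2-DY^2$ with $0.3791/x<\abs{Y/(X-Y\sqrt{D})}<0.6296/x$ and setting $\alpha=(X+Y\sqrt{D})/(X-Y\sqrt{D})$, we have $\abs{\alpha-1}=2\sqrt{\ell}\,\abs{Y/(X-Y\sqrt{D})}<1.3\sqrt{\ell}/x$, which is tiny because $x>3^{\floor{(\ell+1)/6}}$, whence $\abs{\log\alpha}<2\sqrt{\ell}/x$ as well. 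Raising the ideal relation~(\ref{eq21}) to the $h$-th power and using $\p^h=[\pi]$, $\q^h=[\eta]$, we see that $\alpha^h(\pi/\bar\pi)^{\pm m}(\eta/\bar\eta)^{\pm 1}$ generates the unit ideal and hence equals a unit $\zeta$; since $\ell\geq 17$ excludes $D=-3,-4$, we have $\zeta=\pm 1$ when $D<0$ and $\zeta=\pm\ep^k$ for some $k\in\Z$ when $D>0$. Taking logarithms, the quantity $\Lambda:=h\log\alpha$ satisfies $0<\abs{\Lambda}<2h\sqrt{\ell}/x$ and at the same time is a nonzero integral linear combination of $\log(\bar\pi/\pi)$, $\log(\bar\eta/\eta)$ and $R=\log\ep$, the sign of $\zeta$ and the branch of the logarithm when $D<0$ being absorbed into the coefficient of $R$. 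The crucial point is that the heights of $\bar\pi/\pi$, $\bar\eta/\eta$, $\ep$ are bounded in terms of $p$, $q$, $\ell$ alone, with no dependence on $m$.

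Next I would apply Lemma~\ref{lmll} to $\Lambda$ with $n=3$: when $D>0$ the three multiplicatively independent numbers are $\bar\pi/\pi$, $\bar\eta/\eta$, $\ep$ (independence holds because $p\equiv q\equiv 1\pmod\ell$ forces $p$, $q$ to split, so $\p,\bar\p,\q,\bar\q$ are four distinct primes and no nontrivial ideal relation can occur); when $D<0$ the unit group is trivial, the form involves only two logarithms, and one uses the $\kappa=2$ instance of the estimate, padding to $n=3$ so that $C(3)$ appears uniformly. The coefficients are $\pm m$ on $\bar\pi/\pi$, $\pm 1$ on $\bar\eta/\eta$ and $k$ on $\ep$, where comparing absolute values in $\alpha^h=\pm\ep^k(\bar\pi/\pi)^{\pm m}(\bar\eta/\eta)^{\pm 1}$, together with the normalization $\ep^{-1}\leq\abs{\bar\pi/\pi}\leq\ep$ chosen in Section~3, gives $\abs{k}\leq m+1$. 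For the heights, $[\bar\pi/\pi]=(\bar\p/\p)^h$ shows that the finite places contribute $h\log p$ and the archimedean places at most $R$, so $2h(\bar\pi/\pi)\leq h\log p+R$ and hence $A(\bar\pi/\pi)\leq h\log p+R$; likewise $A(\bar\eta/\eta)\leq h\log q+R$; and $A(\ep)=R$. Therefore $\Omega\leq(h\log p+R)(h\log q+R)R$, and, ordering the $\alpha_j$ so that the one of largest height plays the role of $\alpha_n$, the surviving ratios are at most $1$ and $\log B=\log m+O(\log(hR^{-1}\log\max\{p,q\}))$.

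Finally I would combine the two bounds on $\abs{\Lambda}$. Matveev gives $\log\abs{\Lambda}>-C(3)(1+\log 3-\log 2+\log B)\Omega$ (here $\max\{1,n/6\}=1$), while $\Phi_\ell(x)<2x^{\ell-1}$ gives $(\ell-1)\log x>m\log p+\log q-\log 2>m\log p$, so that $\log\abs{\Lambda}<\log(2h\sqrt{\ell})-\log x<\log(2h\sqrt{\ell})-m\log p/(\ell-1)$. Combining,
\[m\log p/(\ell-1)<C(3)(1+\log 3-\log 2+\log B)\Omega+\log(2h\sqrt{\ell}).\]
Dividing by $\log p$ and, where a factor $\log p$ does not already cancel against $\Omega$, using $\log p\geq\log(2\ell+1)>\log 2\ell$, one is led to the five regimes according to which of $h\log p$, $h\log q$, $R$ is largest: in cases (i), (iii), (iv) the factor $\log p$ in $\Omega$ coming from $A(\bar\pi/\pi)$ cancels, while in cases (ii), (v) it does not and a factor $1/\log(2\ell)$ survives; thus $\Omega/\log p$ becomes a bounded multiple of $h^2R\log q$, $hR^2\log q/\log(2\ell)$, $h^2R\log q$, $hR^2$, $R^3/\log(2\ell)$ respectively. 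In each regime the inequality has the shape $m<K(\log m+L)$, with $\log B=\log m+O(\log(\cdots))$ absorbed into $L$, and its solution $m<cK\log(cKL)$ for a small absolute constant $c$, after collecting constants, yields the five stated bounds with the numerical factor $4.56$.

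The main difficulty is the bookkeeping rather than any single idea: making the height estimates and the ordering in the definition of $B$ sharp enough, carrying this through the five separate regimes, and resolving the implicit inequality $m<K(\log m+L)$ cleanly, all while keeping the numerical constant down to $4.56$. The real-versus-imaginary dichotomy — the value of $\kappa$, the branch of the logarithm, and the sign of the unit — also needs attention, but is routine. The conceptual skeleton, namely Lemma~\ref{lm3}, the elementary estimate $x^{\ell-1}<\Phi_\ell(x)<2x^{\ell-1}$, and Matveev's bound, is entirely standard.
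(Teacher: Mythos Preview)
Your proposal is correct and follows essentially the same route as the paper: Lemma~\ref{lm3} gives the ideal identity, raising to the $h$-th power produces a small nonzero linear form $\Lambda=h\log\alpha$ in $\log(\bar\pi/\pi),\log(\bar\eta/\eta),\log\ep$, Matveev's bound (Lemma~\ref{lmll}) with $n=3$ is compared against the elementary upper bound coming from $p^mq=\Phi_\ell(x)\asymp x^{\ell-1}$, and the resulting inequality $m<K(\log m+L)$ is solved in each of the five regimes determined by the relative sizes of $h\log p,h\log q,R$. If anything your sketch is more careful than the paper's own write-up (you handle the small-$x$ range, keep the $\sqrt{\ell}$ in the bound for $\abs{\Lambda}$, and address multiplicative independence and the $D<0$ branch explicitly); the only slip is the phrase ``the surviving ratios are at most~$1$,'' since the ratio $mA(\bar\pi/\pi)/A(\bar\eta/\eta)$ is of order $m$ and is precisely what makes $\log B=\log m+O(1)$.
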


\begin{proof}
We begin by observing that if $m\leq 2\ell\log\ell$,
then we can easily confirm the Theorem exploiting the fact that $p, q>2\ell$.
Indeed, in cases i), iii) and iv), we have $2\ell\log\ell<C\ell \log \ell$ is clearly smaller than
the right hand side of the desired inequality in each case.
Moreover, in cases ii) and v), we have $\abs{R}\geq \log p>\log (2\ell)$
and $2\ell\log\ell<C\ell \log\abs{R}$ is smaller than the right hand side
of the desired inequality in each case.
Hence, we may assume that $m>2\ell\log \ell$, so that $x>\ell^2$.
If $\Phi_\ell(x)=p^m q$, then Lemma \ref{lm3} yields that
there exist two integers $X, Y$ such that
\begin{equation}
\left[\frac{X+Y\sqrt{D}}{X-Y\sqrt{D}}\right]=\left(\frac{\bar\p}{\p}\right)^{\pm m}\left(\frac{\bar\q_j}{\q_j}\right)^{\pm 1},
\end{equation}
with $0<\abs{Y/(X-Y\sqrt{D})}<0.5608/x$.
We can easily see that $(X+Y\sqrt{D})/(X-Y\sqrt{D})\neq \pm 1$ from $Y/(X-Y\sqrt{D})\neq 0$.
Since $\abs{D}=\ell>3$ is odd, $(X+Y\sqrt{D})/(X-Y\sqrt{D})$ cannot be a root of unity.
Hence, taking the $h$-th powers, we have
\begin{equation}\label{eq31}
\left(\frac{X+Y\sqrt{D}}{X-Y\sqrt{D}}\right)^h=\ep^u \left(\frac{\bar{\tau_0}}{\tau_0}\right)^{\pm m}\left(\frac{\bar{\eta_0}}{\eta_0}\right)^{\pm 1}\neq 1
\end{equation}
for some integer $u$,
where we take $\tau_0$ and $\eta_0$ as we explained just before the lemma.
Now let
\begin{equation}
\Lm=u\log \ep\pm m\log \left(\frac{\bar{\tau_0}}{\tau_0}\right)\pm \log\left(\frac{\bar{\eta_0}}{\eta_0}\right)
=h\log \left(\frac{X+Y\sqrt{D}}{X-Y\sqrt{D}}\right).
\end{equation}
Then, proceeding as in the corrigendum of \cite{Ymd3}, (\ref{eq31}) gives that
\begin{equation}
0<\abs{\Lm}<\frac{2hY\sqrt{\ell}}{\abs{X-Y\sqrt{D}}}<\frac{1.1216h\sqrt{\ell}}{x}.
\end{equation}

If $\abs{\Lm}\geq 1$, then we have $x<1.1216h\sqrt{\ell}<h\ell$ and $m<\ell\log x/\log p<\ell\log (h\ell)$.
We can easily confirm the desired inequality in each case.
Hence, we may assume that $\abs{\Lm}<1$.

Before applying Lemma \ref{lmll}, we must obtain upper bounds for $A(\ep)$, $A(\bar\tau_0/\tau_0)$
and $A(\bar\eta_0/\eta_0)$.
If $D>0$, then we deduce from $p^{h/2}\ep^{-1/2}\leq \abs{\tau_0}\leq p^{h/2}\ep^{1/2}$ that
$\abs{\bar\tau_0/\tau_0}\leq \ep$ and $h\log p\leq 2\hgt(\bar\tau_0/\tau_0)\leq h\log p+\log\abs{\ep}$.
Thus, we obtain $h\log p\leq A(\bar\tau_0/\tau_0)\leq h\log p+\abs{R}$
and, similarly, we obtain $h\log q\leq A(\bar\eta_0/\eta_0)\leq h\log q+\abs{R}$.
Moreover, since $\hgt(\ep)=(\log\ep)/2$, we have $A(\ep)\leq \abs{R}$.
If $D<0$, then the situation become simpler.
We can see that $\abs{\tau_0}=\abs{\bar\tau_0}=\abs{\eta_0}=\abs{\bar\eta_0}=p^{h/2}$.
Hence, $\hgt(\bar\tau_0/\tau_0)=\log\abs{\tau_0}_\p^{-1}=(h/2)\log p$ and similarly
$\hgt(\bar\eta_0/\eta_0)=(\log q)/2$.
Now we have $A(\bar\tau_0/\tau_0)=\max\{h\log p, \pi/2\}=h\log p$ since $p\geq 47>e^{\pi/2}$
and similarly $A(\bar\eta_0/\eta_0)=h\log q$.
Moreover, $A(\ep)=A(-1)=\pi=\abs{R}$.
Thus, in any case, we obtain
$h\log p\leq A(\bar\tau_0/\tau_0)\leq h\log p+\abs{R}$,
$h\log q\leq A(\bar\eta_0/\eta_0)\leq h\log q+\abs{R}$
and $A(\ep)\leq \abs{R}$.

We begin by treating the first case $h\log q>h\log p>\abs{R}$.
We have
\begin{equation}\label{eq32}
\frac{mA(\bar\tau_0/\tau_0)}{A(\bar\eta_0/\eta_0)}=\frac{m(h\log p+\abs{\log(\bar\tau_0/\tau_0)}}{h\log q+\abs{\log(\bar\eta_0/\eta_0)}}\leq \frac{m(h\log p+\abs{R})}{h\log q}
\end{equation}
and
\begin{equation}\label{eq33}
\begin{split}
\frac{uA(\ep)}{A(\bar\eta_0/\eta_0)} & =\frac{\abs{u\log\ep}}{A(\bar\eta_0/\eta_0)} \\
& \leq \frac{m\abs{\log(\bar\tau_0/\tau_0)}+\abs{\log(\bar\eta_0/\eta_0)}+\abs{\Lm}}{h\log q} \\
& <\frac{(m+1)\abs{R}+\abs{\Lm}}{h\log q} \\
& <\frac{2m\abs{R}}{h\log q},
\end{split}
\end{equation}
where we recall that $\abs{\Lm}<1$ and
observe that $m>2\ell\log\ell>48, \abs{R}>1.4$ and $(m+1)\abs{R}+\abs{\Lm}<(m+1)\abs{R}+1<2m\abs{R}$.

Since $h\log q>h\log p>\abs{R}$, we see that
$A(\bar\tau_0/\tau_0)<h\log p+\abs{R}<2h\log p$, $A(\bar\eta_0/\eta_0)<h\log q+\abs{R}<2h\log q$
and $B\leq 2m\log p/\log q$.
Hence, Matveev's theorem gives
\begin{equation}
\begin{split}
& \log x-\log(1.1216h\sqrt{\ell})<-\log\abs{\Lm} \\
< & C(2h)^2 \log\left(\frac{2cm\log p}{\log q}\right)\abs{R}(\log p)(\log q)
\end{split}
\end{equation}
and therefore
\begin{equation}
\begin{split}
& \frac{m\log p}{\log q}<\frac{\ell\log x}{\log q} \\
< & \ell\left(\frac{\log(1.1216h\sqrt{\ell})}{\log q}
+4Ch^2 \abs{R}\log\left(\frac{2m\log p}{\log q}\right)(\log p)\right).
\end{split}
\end{equation}
Taking it into account that $C>10^{10}$, we may assume that \\ $(2cm\log p)/\log q>10^{10}$;
otherwise \eqref{eq301} automatically holds.
Now we observe that $q, p\geq \max\{\ell, 47\}$ and
$2c\log(1.1216h\sqrt{\ell})/\log q<1+\log h+\log\ell\leq 1+\log h+\log p<2h\log p$.
Hence, we obtain
\begin{equation}\label{eq34}
\begin{split}
\frac{2cm\log p}{\log q} & <(8cC+1)\ell h^2 \abs{R}\log\left(\frac{2cm\log p}{\log q}\right)(\log p) \\
& =:U\log\left(\frac{2cm\log p}{\log q}\right).
\end{split}
\end{equation}

In other words, putting $W=2cm\log p/\log q$, we have $W/\log W<U$.
Since $U>8cC\geq 8cC_1(3)>2\times 10^{12}$, we have $(\lm U\log U)/\log(\lm U\log U)<U$
with $\lm=1.12212$.
Thus we obtain $W<\lm U\log U$.
Noting that $8cC>2\times 10^{12}$, $\ell\geq 17$, $\abs{R}>1.4$ and $p\geq 47$,
we have $\log(8cC\ell\abs{R}\log p)>32.84$ and
\begin{equation}
\begin{split}
\log U= & \log (\lm (8cC+1)\ell h^2\abs{R}\log p) \\
< & \log(8cC\ell h^2\abs{R}\log p)+\log(1.12213) \\
< & 1.00351\log (8cC\ell h^2\abs{R}\log p).
\end{split}
\end{equation}
Hence, (\ref{eq34}) yields that
\begin{equation}
\begin{split}
& \frac{2cm\log p}{\log q}< \lm U\log U \\
& \quad <\lm(8cC+1)\ell h^2\abs{R}(\log p)\times 1.00351\log (8cC\ell h^2\abs{R}(\log p))
\end{split}
\end{equation}
and, dividing by $2c$,
\begin{equation}
\frac{m\log p}{\log q}<4.505C\ell h^2 \abs{R}(\log p)(\log (\ell h^2 \abs{R})+\log \log p+\log (8cC)),
\end{equation}
proving i).

Then, if $h\log q>\abs{R}>h\log p$, then $A(\bar\tau/\tau)<2\abs{R}, A(\bar\eta/\eta)<2h\log q$
and $B\leq 2m\abs{R}/h\log q$.
Moreover, (\ref{eq32}) and (\ref{eq33}) hold as in the previous case.
Hence, an argument similar to above yields that
\begin{equation}
\frac{m\log p}{\log q}<\ell\left(\frac{\log(1.1216h)}{\log q}+4Ch\abs{R}^2\log\left(\frac{2cm\abs{R}}{h\log q}\right)\right)
\end{equation}
and, observing that $p>2\ell$,
\begin{equation}
\frac{m\abs{R}}{h\log q}<\ell\left(\frac{\abs{R}\log(1.1216h)}{h(\log (2\ell))(\log q)}+4C\frac{\abs{R}^3}{\log (2\ell)}\log\left(\frac{2cm\abs{R}}{h\log q}\right)\right).
\end{equation}

We see that $\log(8cC\ell\abs{R}^3/\log(2\ell))>\log(8cC\ell(\log^3 p)/\log(2\ell))>33.85$ in this case.
Thus, proceeding as above, we obtain
\begin{equation}
\frac{m\abs{R}}{h\log q}<4.505C\frac{\ell}{\log (2\ell)} \abs{R}^3 \log\left(\frac{8cC\ell \abs{R}^3}{\log(2\ell)}\right),
\end{equation}
which proves ii).

In the remaining cases, similar arguments give iii), iv) and v).
\end{proof}

\section{Proof of the main theorem}

In this section, we shall prove the main theorem.

Assume that $\Phi_\ell(x_i)=p^{m_i} q$ has five solutions $x_1<x_2<x_3<x_4<x_5$
such that $x_1$ and $x_2$ are multiplicatively independent.
It is clear that $x_1\geq \max\{q^{1/\ell}, 2\}$.
Since we have assumed that $x_1$ and $x_2$ are multiplicatively independent,
Lemma \ref{lm4} yields that $x_3\geq\max\{q, 2^\ell\}^{\floor{(\ell +1)/6}^2/\ell}$.
Now it follows from Lemma \ref{lm5} that
\begin{equation}\label{eq41}
m_5>0.397\pi x_3>0.397\pi \max\{q^{\floor{(\ell +1)/6}^2/\ell}, 2^{\floor{(\ell +1)/6}^2}\}:=M.
\end{equation}

We begin by the case $\ell\geq 47$.
If $\ell\equiv 3\pmod{4}$, then $R=\pi i$.
If $\ell\equiv 1\pmod{4}$, then, noting that $\ell$ is prime,
it follows from Proposition 3.4.5 of \cite[p. 138]{HC1} and Proposition 10.3.16 of \cite[p. 200]{HC2}
that $hR<\ell ^{1/2}((\log\ell)/2+\log\log\ell+2.8)$
($hR\leq \ell ^{1/2}\log (4\ell)$ in p.199 of \cite{Fai} can also be used).
Now Theorem \ref{th2} implies that $m_5<M$, which contradicts to (\ref{eq41}).
Hence, if $\ell\geq 47$, then $\Phi_\ell(x)=p^m q$ can never have five solutions
$x_1<\cdots <x_5$ such that $x_1$ and $x_2$ are pairwise multiplicatively independent.

Next, assume that $\ell=43$.
We must have $x_1\geq 3$ since $2^{43}-1=431\times 9719\times 2099863$ has
three distinct prime factors.
Thus we must have $m_5>0.397\pi \max\{q^{49/43}, 3^{49}\}$,
which exceeds the upper bounds given in Theorem \ref{th2} with $h=1, R=\pi i$.
Indeed, Theorem \ref{th2} would yield that,
if $q<3^{43}$, then $m_5<5\times 10^{16}<0.397\pi\times 3^{49}<m_5$ and,
if $q>3^{43}$, then $m_5<2.8\times 10^{13}(\log q)(\log\log q+35)<0.397\pi q^{49/43}<m_5$.
In both cases, we are led to a contradiction.
Hence, $\Phi_{43}(x_i)=p^{m_i} q$ can never have five solutions $x_1<\cdots <x_5$
such that $x_1$ and $x_2$ are pairwise multiplicatively independent.

\begin{table}
\caption{Estimates when $\ell\leq 41$ and $x_1$ is large}
\begin{center}
\begin{small}
\begin{tabular}{| c | c | c | c | c | c |}
 \hline
$\ell$ & $h$ & $R$ & $x_1\geq$ & $x_2>$ & $x_3>$ \\
 \hline
$17$ & $1$ & $\log (4+\sqrt{17})$ & $60$ & $x_1^3$ & $\max\{q^{9/17}, 60^9\}$ \\
$19$ & $1$ & $\pi i$ & $68$ & $x_1^3$ & $\max\{q^{9/19}, 68^9\}$ \\
$23$ & $3$ & $\pi i$ & $13$ & $x_1^4$ & $\max\{q^{16/23}, 13^{16}\}$ \\
$29$ & $1$ & $\log ((5+\sqrt{29})/2)$ & $5$ & $x_1^5$ & $\max\{q^{25/29}, 5^{25}\}$ \\
$31$ & $3$ & $\pi i$ & $5$ & $x_1^5$ & $\max\{q^{25/31}, 5^{25}\}$ \\
$37$ & $1$ & $\log (6+\sqrt{37})$ & $3$ & $x_1^6$ & $\max\{q^{36/37}, 3^{36}\}$ \\
$41$ & $1$ & $\log (32+5\sqrt{41})$ & $3$ & $x_1^7$ & $\max\{q^{49/41}, 3^{49}\}$ \\
 \hline
\end{tabular}
\label{tbl1}
\end{small}
\end{center}
\end{table}

\begin{table}
\caption{Estimates when $\ell\leq 41, m_1>0$ and $x_1$ is small}
\begin{center}
\begin{small}
\begin{tabular}{| c | c | c | c | c | c |}
 \hline
$\ell$ & $x_1$ & $p, q\geq$ & $p, q\leq$ \\
 \hline
\multirow{3}{*}{$17$} & $3, 4, 5, 7, 10, 12, 14, 15$, & \multirow{3}{*}{$103$} & \multirow{3}{*}{$362759437743508955104646759$} \\
& $19, 23, 26, 32, 39, 41, 42$, & & \\
& $44, 45, 46, 48, 58, 61$ & & \\
 \hline
\multirow{3}{*}{$19$} & $3, 4, 6, 7, 13, 15, 18, 21$, & \multirow{3}{*}{$191$} & \multirow{3}{*}{$607127818287731321660577427051$} \\
& $26, 28, 29, 30, 33, 34, 35$, & & \\
& $37, 38, 50, 61, 62, 63$ & & \\
 \hline
$23$ & $2, 3, 5$ & $47$ & $332207361361$ \\
$37$ & $2$ & $223$ & $616318177$ \\ 
$41$ & $2$ & $13367$ & $164511353$ \\
 \hline
\end{tabular}
\label{tbl2}
\end{small}
\end{center}
\end{table}

If $\ell\leq 41$ and $x_1$ is not less than the corresponding value given in Table \ref{tbl1},
then $x_2$ and $x_3$ exceeds the value given in this table.
Now we see that $m_5>0.397\pi x_3$ exceeds our upper bound $M$, which leads to a contradiction.

Now we shall examine the remaining cases.
Then $m_1=0$ or $x_1$ must be one of the values given in Table \ref{tbl2} and
$p, q$ must be in the range given in this table.

Assume that $x_1$ is one of the values given in Table \ref{tbl2}.
In any case, Theorem \ref{th2} gives that $m<1.37\times 10^{17}$.
But we have confirmed that $x_2>p^4\geq 47^4>10^6$ for these cases.
Hence, we must have $x_3>x_2^4>10^{24}$ and $m_5>x_3>10^{24}$
for all cases given in Table \ref{tbl2}, which is a contradiction again.

For example, in the case $\ell=23$ (in this case, we have $h=3$ and $R=\pi i$),
if $x_1\geq 13$, then we must have $m_5>0.397\pi \max\{q^{16/23}, 13^{16}\}$,
which exceeds the upper bounds given in Theorem \ref{th2}.

If $x_1<13$, then we must have $x_1=2, 3, 5$;
$(10^{23}-1)/9$ is prime and $(x^{23}-1)/(x-1)$ with $x=4, 6, 7, 8, 9, 11$ or $12$
has more than two distinct prime factors.

If $x_1=2, 3$ or $5$, then $p, q\leq 332207361361$ and $m<1.3\times 10^{17}$.
But, in any case, we have confirmed that $x_2>p^4>10^6$.
Hence, we must have $x_3>x_2^4>10^{24}$ and $m_5>x_3>10^{24}$,
which is a contradiction.

Next assume that $m_1=0$ or, equivalently, $(x_1^\ell -1)/(x_1-1)=q$.
Thus, $(x, \ell)=(2, 31), (10, 23)$ or
$\ell=19, x\in \{2, 10, 11, 12, 14, 19, 24, 40, 45, 46, 48, \\
65, 66, 67\}$
or $\ell=17, x\in \{2, 11, 20, 21, 28, 31, 55, 57\}$.
We observe that $x_1^r$ with $1\leq r\leq \ell -1$ give the complete solutions
to the congruence $(X^\ell -1)/(X-1)\equiv 0\pmod{q}$ and $x_1^{\ell -1}<q$.
Since $x_1, x_2$ are multiplicatively independent, we must have $x_2>x_1^\ell >\max\{q, 2^\ell\}$.
Thus, $x_3>x_2^{\floor{(\ell +1)/6}}$ and $m_5>0.397\abs{R} x_3$.
However, in any case, this exceeds the upper bound for $m$ given by Theorem \ref{th2}.

For example, if $\ell=17$ and $x_1=2$, then $x_2\geq 2^{17}$,
$x_3>x_2^3$ and $m_5>0.397\log(4+\sqrt{17}) x_3>1.8\times 10^{15}$
while Theorem \ref{th2} gives $m_5<1.2\times 10^{15}$.

Thus, we have proved that if $\Phi_\ell(x_i)=p^{m_i} q$ has five solutions
$x_1<x_2<x_3<x_4<x_5$, then $x_1$ and $x_2$ are multiplicatively dependent.
Combining it with Lemma \ref{lm4}, the proof of Theorem \ref{th1} is completed.

{}

\begin{thebibliography}{}
\bibitem{Ban}
A. S. Bang, Taltheoretiske Unders{\o}gelser, \textit{Tidsskrift Math.} \textbf{5 {IV}} (1886), 70--80 and 130--137.

\bibitem{Beu1}
F. Beukers, On the generalize Ramanujan-Nagell equation I,
\textit{Acta Arith.} \textbf{38} (1980/81), 389--410.

\bibitem{Beu2}
F. Beukers, On the generalize Ramanujan-Nagell equation II,
\textit{Acta Arith.} \textbf{39} (1981), 123--132.

\bibitem{CW}
G. L. Cohen and R. J. Williams, Extensions of some results concerning odd perfect numbers,
\textit{Fibonacci Quart.} \textbf{23} (1985), 70--76.

\bibitem{HC1}
Henri Cohen, \textit{Number Theory, Volume I: Tools and Diophantine Equations}, Springer, 2007.

\bibitem{HC2}
Henri Cohen, \textit{Number Theory, Volume II: Analytic and Moderl Tools}, Springer, 2007.

\bibitem{Dic}
L. E. Dickson, On the cyclotomic function, \textit{Amer. Math. Monthly} \textbf{12} (1905), 86--89.

\bibitem{Eve}
J.-H. Evertse, On equations in $S$-units and the Thue-Mahler equation,
\textit{Inv. Math.} \textbf{75} (1984), 561--584.

\bibitem{Fai}
Alain Faisant, {\it L'\'{e}quation diophantinee du second degr\'{e}}, Hermann, 1991.

\bibitem{FNO}
S. Adam Fletcher, Pace P. Nielsen and Pascal Ochem, Sieve methods for odd perfect numbers,
{\it Math. Comp.} \textbf{81} (2012), 1753--1776.

\bibitem{HMD}
Peter Hagis Jr. and Wayne L. McDaniel, A new result concerning the structure of odd perfect numbers,
\textit{Proc. Amer. Math. Soc.} \textbf{32} (1972), 13--15.

\bibitem{Ka1}
H.-J. Kanold, Untersuchungen {\"u}ber ungerade vollkommene Zahlen, \textit{J. Reine Angew. Math.} \textbf{183} (1941), 98--109.

\bibitem{Ka2}
H.-J. Kanold, S{\"a}tze {\"u}ber Kreisteilungspolynome und ihre Anwendungen auf einige zahlentheoretische Probleme, {I}, \textit{J. Reine Angew. Math.} \textbf{187} (1950), 169--182.

\bibitem{Mat}
E. M. Matveev, An explicit lower bound for a homogeneous rational linear form in the logarithms of algebraic numbers. II, \textit{Izv. Ross. Akad. Nauk Ser. Mat.} \textbf{64} (2000), 125--180, Eng. trans., \textit{Izv. Math.} \textbf{64} (2000), 127--169.

\bibitem{Mc}
Wayne L. McDaniel, The non-existence of odd perfect numbers of a certain form,
\textit{Arch. Math.} (Basel) \textbf{21} (1970), 52--53.

\bibitem{MDH}
Wayne L. McDaniel and P. Hagis Jr., Some results concerning the non-existence of odd perfect numbers of the form $p^{a}M^{2\beta }$,
\textit{Fibonacci Quart.} \textbf{13} (1975), 25--28.

\bibitem{Nag}
T. Nagell, {\it Introduction to Number Theory}, Second edition, Chelsea, New York, 1964.

\bibitem{Nie}
Pace P. Nielsen, An upper bound for odd perfect numbers,
\textit{Integers} \textbf{3} (2003), \#A14.

\bibitem{St}
R. Steuerwald, Versch{\"a}rfung einer notwendigen Bedingung f{\"u}r die Existenz einen ungeraden vollkommenen Zahl,
\textit{S.-B. Bayer. Akad. Wiss.} 1937, 69--72.

\bibitem{Ymd1}
T. Yamada, Odd perfect numbers of a special form, \textit{Colloq. Math.} \textbf{103} (2005), 303--307.

\bibitem{Ymd2}
T. Yamada, On the simultaneous equations $\sigma(2^a)=p^{f_1}q^{g_1}, \sigma(3^b)=p^{f_2}q^{g_2}, \sigma(5^c)=p^{f_3}q^{g_3}$, \textit{Publ. Math. Debrecen}, \textbf{93} (2018), 57--71.

\bibitem{Ymd3}
T. Yamada, A new upper bound for odd perfect numbers of a special form,
\textit{Colloq. Math.} \textbf{156} (2019), 15--23, corrigendum submitted, arXiv: 1706.09341.

\bibitem{Zsi}
K. Zsigmondy, Zur Theorie der Potenzreste, \textit{Monatsh. f{\"u}r Math.} \textbf{3} (1892), 265--284.

\end{thebibliography}
\end{document}